\theoremstyle{plain} 
\newtheorem*{lemma}{Lemma}
\newtheorem*{theo*}{Theorem}
\newcommand{\PSL}{\operatorname{PSL}}
\newcommand{\Ker}{\operatorname{Ker}}
\newcommand{\Z}{\mathrm{Z}}
\newcommand{\ZZ}{\mathbb{Z}}
\newcommand{\QQ}{\mathbb{Q}}
\newcommand{\FF}{\mathbb{F}}
\newcommand{\V}{\mathrm{V}}
\newlength{\heightofhw}
\newcommand{\eigbox}[2]{\ensuremath{#1 \times \boxed{\rule{0cm}{\heightofhw} #2}}}
\definecolor{LinkColor}{rgb}{0,0,0} 
\begin{document}

\title[Torsion subgroups in the units of the integral group ring of $\PSL(2,p^3)$]{Torsion subgroups in the units of the\\ integral group ring of $\PSL(2,p^3)$}
\author{Andreas B\"achle}
\address{Vakgroep Wiskunde, Vrije Universiteit Brussel, Pleinlaan 2, 1050 Brussels, Belgium}
\email{\href{mailto:abachle@vub.ac.be}{abachle@vub.ac.be}}
\author{Leo Margolis}
\address{Fachbereich Mathematik, Universit\"{a}t Stuttgart, Pfaffenwaldring 57, 70569 Stuttgart, Germany}
\email{\href{mailto:leo.margolis@mathematik.uni-stuttgart.de}{leo.margolis@mathematik.uni-stuttgart.de}}
\date{\today}
\thanks{The first author is supported by the Research Foundation Flanders (FWO - Vlaanderen).}
\subjclass[2010] {16U60, 20C05 (primary) and 16S34 (secondary)} 
\keywords{integral group ring, torsion units, projective special linear group, $p$-subgroups}

\begin{abstract}
 We show that for every prime $r$ all $r$-subgroups in the normalized units of the integral group ring of $\PSL(2,p^3)$ are isomorphic to subgroups of $\PSL(2,p^3)$. This answers a question of M.~Hert\-weck, C.R.~H{\"o}fert and W.~Kimmerle for this series of groups.
\end{abstract}

\maketitle

 Let $G$ be a finite group and $\ZZ G$ its integral group ring. A natural question is how far the structure of $G$ is reflected in the structure of the torsion part of $\V(\ZZ G)$, the group of normalized units of the integral group ring of $G$, and vice versa. One of the first questions studied in this context is, whether a finite subgroup of $\V(\ZZ G)$ is necessarily isomorphic to a subgroup of $G$, which was already raised in G.~Higman's PhD thesis \cite[Section 5]{Hig} (cf.\ also the article \cite{Sand1} outlining this thesis). It was later explicitly stated as a problem in \cite[Problem 5.4]{Sand2}.
 
 A first step towards an answer to the above question lies in the consideration of $p$-subgroups.  It is known that if $G$ has cyclic Sylow $p$-subgroups then all $p$-subgroups of $\V(\ZZ G)$ are cyclic; for $p = 2$  see \cite[Proposition]{C2C2} (the proof involves the Brauer-Suzuki theorem) and cf.\ \cite[Corollary 1]{CpCp} for odd primes $p$. If the Sylow $2$-subgroups of $G$ are elementary abelian, then so are all $2$-subgroups of $\V(\ZZ G)$ and in particular they are isomorphic to subgroups of $G$ (combining  \cite[Corollary 4.1]{CL} and \cite[Corollary 1.7]{Saksonov}, cf.\ also \cite[Lemma (37.3)]{SehgalBook2}). But it is not even known in general whether abelian Sylow $p$-subgroups of $G$ cause that the $p$-subgroups of $\V(\ZZ G)$ are abelian. However, this assertion holds for solvable $G$ \cite[Proposition 2.11]{DoJu}.
 
 In the concluding remarks of \cite{HHK} the question whether the $p$-subgroups of $\V(\ZZ \PSL(2,p^f))$ are (elementary) abelian is highlighted. For $p = 2$ or $f \leq 2$ there is an even stronger assertion, recently proved by the second author \cite{Mar_Sylow}: for all primes $r$ the $r$-subgroups of $\V(\ZZ \PSL(2,p^f))$ are conjugate by a unit of the corresponding rational group algebra to a subgroup of the group base $\PSL(2,p^f)$. It is also known that any subgroup of $\V(\ZZ \PSL(2,p^f))$ of the same order as $\PSL(2,p^f)$ is conjugate in the rational group algebra to the group base $\PSL(2,p^f)$ \cite[Propositions 3.2 and 4.1]{Bleher}.
 
In this article we prove that for all primes $p$ the $p$-subgroups of the group $\V(\ZZ \PSL(2,p^3))$ are abelian. We obtain the following result:
 \begin{theo*} Let $p$ be a prime and set $G = \PSL(2,p^3)$. Then for all primes $r$, the $r$-subgroups of $\V(\ZZ G)$ are isomorphic to subgroups of $G$. \end{theo*}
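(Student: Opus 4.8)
The plan is to split the analysis according to the Sylow structure of $G=\PSL(2,p^3)$, whose order is $\tfrac{1}{\gcd(2,p-1)}\,p^3(p^3-1)(p^3+1)$. By Dickson's classification the Sylow $p$-subgroup $P$ is elementary abelian of order $p^3$, the Sylow $r$-subgroups for odd $r\neq p$ are cyclic, and for odd $p$ the Sylow $2$-subgroup is dihedral. Throughout I would use the theorem of Cohn and Livingstone that the order of a torsion unit of $\V(\ZZ G)$ divides the exponent of $G$; in particular the $r$-part of the order of any torsion unit of $\V(\ZZ G)$ is bounded by $|G|_r$.

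First I would dispose of the primes with cyclic Sylow subgroups. For every odd prime $r\neq p$ the group $G$ has cyclic Sylow $r$-subgroups, so by the cited results every $r$-subgroup of $\V(\ZZ G)$ is cyclic; combined with Cohn--Livingstone such a group has order at most $|G|_r$ and is therefore isomorphic to a subgroup of the cyclic Sylow $r$-subgroup. For $r=2$ and $p$ odd the $2$-part of $|G|$ equals that of $|\PSL(2,p)|$, since the cofactors $p^2\pm p+1$ of $p^3\mp 1$ are odd; when this Sylow $2$-subgroup is a Klein four group (that is, $p^3\equiv 3,5\pmod 8$) the cited combination of results shows that every $2$-subgroup of $\V(\ZZ G)$ is elementary abelian of order dividing $4$, hence isomorphic to a subgroup of $G$. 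In the remaining case the Sylow $2$-subgroup is dihedral of order at least $8$, and I would run the Luthar--Passi method (HeLP) on the few conjugacy classes of $2$-elements of $G$ to show that every $2$-subgroup of $\V(\ZZ G)$ is again cyclic or dihedral, i.e.\ isomorphic to a subgroup of a dihedral $2$-group.

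The heart of the argument is the defining characteristic $r=p$. Here Cohn--Livingstone already gives that every $p$-element of $\V(\ZZ G)$ has order dividing $p$, since $\exp(G)_p=p$; thus every $p$-subgroup $U$ of $\V(\ZZ G)$ has exponent $p$. To bound its order I would pass to the $p$-adic group ring and use Weiss's rigidity theorem, which yields that $\ZZ_p G$ is free as a $\ZZ_p U$-module for any finite $p$-subgroup $U\leq\V(\ZZ_p G)$. Let $\widehat{\mathrm{St}}$ be a $\ZZ_p G$-lattice affording the Steinberg module; it is projective (defining characteristic) of $\QQ_p$-dimension $p^3=|P|$, so it is a direct summand of a free $\ZZ_p G$-module and therefore, restricted to $U$, a projective---hence free, as $\ZZ_p U$ is local---$\ZZ_p U$-module. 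Consequently $|U|$ divides $p^3$. Together with $\exp(U)=p$ this forces $U$ to be elementary abelian of rank at most $3$, with the single exception that $U$ is the \emph{extraspecial} group of order $p^3$ and exponent $p$ (which can occur only for odd $p$).

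The main obstacle is therefore to exclude this extraspecial group. The character-sum and Weiss arguments above do not distinguish it from the elementary abelian group of the same order, because both act on the Steinberg lattice as the regular representation and both have the same multiset of element orders; the exclusion must genuinely exploit the commutator relation $[x,y]=z$ among the generators. Here I would combine the HeLP constraints on the partial augmentations of the order-$p$ units $x,y,xy,\dots$ (these are supported on the two unipotent classes of $G$, and their admissible values are pinned down by the quadratic irrationalities $\tfrac12(\pm1\pm\sqrt{\pm p^3})$ appearing in the exceptional characters of degree $\tfrac{p^3\pm1}{2}$) with the requirement that the restriction to $U$ of every ordinary character of $G$ decompose with non-negative integer multiplicities over the irreducible characters of $U$; for the extraspecial group the latter include a faithful character of degree $p$ on which the central unit $z=[x,y]$ acts by a nontrivial scalar, and I expect the resulting system to be inconsistent. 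Once the extraspecial group is ruled out, every $p$-subgroup of $\V(\ZZ G)$ is elementary abelian of rank at most $3$ and hence isomorphic to a subgroup of $P\leq G$. Assembling the three cases $r$ odd $\neq p$, $r=2$, and $r=p$ completes the proof.
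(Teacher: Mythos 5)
Your reduction to the critical case is correct and is the same as the paper's: Dickson's description of the Sylow subgroups of $\PSL(2,p^3)$ leaves $r=p\geq 3$ as the only hard case, and there the exponent bound (Cohn--Livingstone) together with an order bound shows that a putative counterexample $H\leq \V(\ZZ G)$ must be the Heisenberg group, i.e.\ extraspecial of order $p^3$ and exponent $p$. Two remarks on this part. First, your Weiss/Steinberg-lattice argument for $|H|\mid p^3$ is correct but much heavier than needed: Saksonov's theorem that the order of a finite subgroup of $\V(\ZZ G)$ divides $|G|$ (which the paper uses) gives this immediately. Second, for dihedral Sylow $2$-subgroups of order at least $8$ you only say you ``would run HeLP'' to show that all $2$-subgroups of $\V(\ZZ G)$ are cyclic or dihedral; that is itself a non-trivial theorem (it is Theorem 2.1 of Hertweck--H\"ofert--Kimmerle, which the paper cites rather than reproves), so as written this is a secondary gap, though one that could be closed by a citation.

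The genuine gap is the exclusion of the Heisenberg group, which is the entire mathematical content of the paper, and your proposal stops exactly where that work begins: ``I expect the resulting system to be inconsistent'' is an expectation, not an argument --- the inconsistency of that system \emph{is} the theorem. Moreover, the mechanism you single out points in the wrong direction. The faithful degree-$p$ characters of $H$, on which $z=[b,c]$ acts by a non-trivial scalar, are used in the paper only for bookkeeping: since they vanish off $\Z(H)$, they force every non-central $w\in H$ to have $(\varepsilon_g(w),\varepsilon_h(w))\in\{(1,0),(0,1)\}$, but their multiplicities in $\eta|_H$ produce no contradiction by themselves. The actual contradiction is global and arithmetic: one computes, using the Gaussian sums, that for every non-trivial \emph{linear} character $\chi$ of $H$ one has $\langle \eta|_H,\chi\rangle_H=\frac{1+\epsilon\gamma-\epsilon\delta}{2}$, where $\gamma+\delta=p$ counts how the $p$ classes of non-central cyclic subgroups outside $\Ker(\chi)$ distribute over the two admissible partial-augmentation patterns; non-negativity of this inner product and of the one for a Galois twist $\chi^n$, $n\in N$, forces $|\gamma-\delta|\leq 1$ for every such $\chi$. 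Encoding all these constraints by Legendre symbols $\beta_i=(\,a_i\mid p\,)$ yields a $p\times p$ system of equations; summing the equations shows it is in fact homogeneous, and a rank computation over $\FF_2$ for the matrix with zero diagonal and ones elsewhere shows its null space is spanned by the all-ones vector, so $\beta_0=\dots=\beta_{p-1}$, contradicting the constraint $\sum_i\beta_i=\pm 1$ coming from the remaining family of linear characters. None of this --- the inner-product formula, the quadratic-residue combinatorics, or the final rank argument --- is present in, or follows routinely from, your outline.
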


 The method of choice here is what one could call a ``non-cyclic HeLP-method''. For a finite group $G$ consider a finite subgroup $U$ of $\V(\ZZ G)$. Every ordinary representation of $G$ can be linearly extended to a representation of $\ZZ G$ and then restricted to a representation $D$ of $\V(\ZZ G)$. Let its character be denoted by $\chi$. If $\psi$ is a character of $U$, then for the inner product we necessarily have \[ \left< \chi|_U, \psi \right>_U \in \ZZ_{\geq 0}. \] For cyclic $U$ this can be expressed in a explicit formula, whose application is known as the HeLP-method, cf.\ \cite[Theorem 1]{LutharPassiA5} and \cite[Section 4]{HertweckBrauer}. The method for non-cyclic $U$ was used for the first time in the PhD-thesis of C.R. H{\"o}fert \cite[page 58]{HoefPhD} and later in \cite{CpCp} and \cite[page 2]{HHK}.
 
 We will introduce some notation. Let $u = \sum_{g \in G} z_g g \in \ZZ G$ be a normalized torsion unit. For $x^G$, the conjugacy class of the element $x \in G$ in $G$, we denote by \[ \varepsilon_x(u) = \sum_{g \in x^G} z_g \] the \emph{partial augmentation of $u$ at $x$}. The partial augmentations will provide us with restrictions on the possible eigenvalues of $D(u)$ and vice versa. By the so-called Berman-Higman Theorem $\varepsilon_1(u) = 0$ if $u \not= 1$ \cite[Theorem 10]{Hig}, \cite[Lemma 2]{Ber} (cf.\ also \cite[Proposition (1.4)]{SehgalBook2}). Also $\varepsilon_x(u) = 0$ if the order of $x$ does not divide the order of $u$ \cite[Proposition 2.2]{HertweckBrauer}. Moreover, the order of a finite subgroup of $\V(\ZZ G)$ divides the order of $G$ \cite[Corollary 1.7]{Saksonov}, cf. also \cite[Lemma (37.3)]{SehgalBook2}, and its exponent divides the exponent of $G$ \cite[Corollary 4.1]{CL}.  We will use these facts in the sequel without further mention.
 
 Let $A$ be a complex matrix of finite order. Assume that $A$ has eigenvalues $\alpha_1, ..., \alpha_j$ each with multiplicity $m_1$, eigenvalues $\beta_1, ..., \beta_k$ each with multiplicity $m_2$ and eigenvalues $\gamma_1, ..., \gamma_\ell$ each with multiplicity $m_3$, then we indicate this by 
\[ A \sim \left(\eigbox{m_1}{\alpha_1, ..., \alpha_j}, \eigbox{m_2}{\beta_1, ..., \beta_k}, \eigbox{m_3}{\gamma_1, ..., \gamma_\ell} \right). \]

\section*{Proof of the theorem.}

 Set $G = \PSL(2,p^3)$. By a result of Dickson \cite[260.]{Dickson} (see also \cite[8.27 Hauptsatz]{Huppert1}), the Sylow $r$-subgroups of the simple groups $\PSL(2,p^f)$ are elementary abelian for $r = p$, cyclic for odd $r \not= p$, and dihedral groups if $r = 2 \not= p$. By the results cited above we obtain that the $r$-subgroups of $\V(\ZZ G)$ are isomorphic to subgroups of $G$, provided the Sylow $r$-subgroups are cyclic. The case of elementary abelian Sylow $2$-subgroups is also handled by the remarks in the introduction. If the Sylow $2$-subgroups are dihedral, the result is obtained in \cite[Theorem 2.1]{HHK}. It remains the case $r = p \geq 3$. Note that in this case the Sylow $p$-subgroups of $G$ are elementary abelian of order $p^3$.
 
 Let $H$ be a finite $p$-subgroup of $\V(\ZZ G)$. Hence $|H| \leq p^3$ and $\exp H \mid p$. Assume that $H$ is not isomorphic to a subgroup of $G$, then, by the classification of all $p$-groups up to order $p^3$, it is a so-called Heisenberg group. Thus there are elements $z, b, c \in H$ such that \begin{align} H = \langle z,\ b,\ c\ |\ z^p = b^p = c^p = 1,\ z \in \Z(H),\ c^{-1}bc = zb \rangle \simeq (C_p \times C_p) \rtimes C_p.\label{eq:defH}\end{align}
 
 We will use the non-cyclic HeLP-method to show that $H$ does not exist. In $G$ there are exactly two conjugacy classes of elements of order $p$, let $g$ and $h$ be representatives of these classes. In Table \ref{CT_PSL_2_p3} we list two irreducible characters \cite[Theorem 38.1]{Dornhoff}, one of them we will use in the remainder of the proof. Let $\epsilon \in \{1, -1\}$ such that $p \equiv \epsilon \bmod 4$.
 
  \begin{table}[ht]\centering
 \begin{tabular}{cccc}  \hline
 & $1$ & $g$ & $h$  \\ \hline  \hline \\[-.38cm]
 $\eta$ & $\frac{p^3+\epsilon}{2}$ & $\frac{\epsilon + \sqrt{\epsilon p^3}}{2}$ & $\frac{\epsilon - \sqrt{\epsilon p^3}}{2}$ \\
 $\eta'$ & $\frac{p^3+\epsilon}{2}$ & $\frac{\epsilon - \sqrt{\epsilon p^3}}{2}$ & $\frac{\epsilon + \sqrt{\epsilon p^3}}{2}$ \\[0.05cm] \hline
 \end{tabular} \\[.1cm]
 \caption{Part of the character table of $G = \PSL(2,p^3)$, $p \geq 3$, $\epsilon \in \{1, -1\}$ such that $p \equiv \epsilon \bmod 4$.}\label{CT_PSL_2_p3}
 \end{table}
 
 Let $\zeta = \exp\left(2\pi i / p\right)\in \mathbb{C}$, a primitive $p$-th root of unity, $Q$ be a set of integral representatives of the quadratic residues in $(\ZZ / p \ZZ )^\times $ and $N$ be a set of integral representatives of the non-quadratic residues in $(\ZZ / p \ZZ)^\times$. We will also use the Gaussian sums cf.\ \cite{Gauss} (also \cite[Proof of (8.6) in Chapter I]{Neukirch_english})
 
 \begin{equation} \begin{split}
    \sqrt{\epsilon p} & = 1 + 2\sum_{q \in Q} \zeta^q, \\
   -\sqrt{\epsilon p} & = 1 + 2\sum_{n \in N} \zeta^n.
 \label{eq:gs}\end{split} \end{equation}
 
 Let $D$ be a representation affording $\eta$. Let $u \in H \setminus \{1\}$ and $\alpha \in \ZZ_{\geq 0}$ such that $(\varepsilon_g(u) , \varepsilon_h(u)) = (\alpha + 1, - \alpha)$ or $(\varepsilon_g(u) , \varepsilon_h(u)) = (-\alpha, \alpha + 1)$.  By slight abuse of notation we denote from here on by $\eta$ also the restriction $\eta|_H$. For the first possibility of the partial augmentations of $u$ we obtain, using the first equation of \eqref{eq:gs},
  \begin{align} \eta(u) &= (\alpha + 1) \left( \frac{\epsilon + \sqrt{\epsilon p^3}}{2} \right) + (-\alpha) \left(\frac{\epsilon - \sqrt{\epsilon p^3}}{2} \right) \nonumber \\ &= \frac{1}{2} \left(\epsilon + (2\alpha + 1) p\sqrt{\epsilon p} \right) \nonumber \\ &= \frac{p + \epsilon}{2} + \alpha p + (2\alpha + 1) p \sum_{q \in Q} \zeta^q. \label{eq:chraval1}\end{align}
  
 For the second possibility for the partial augmentations of $u$ we get, using the second equation of \eqref{eq:gs},
   \begin{align} \eta(u) = \frac{p + \epsilon}{2} + \alpha p + (2\alpha + 1) p \sum_{n \in N} \zeta^n. \label{eq:chraval2}\end{align}
  
  From the character value we get $\frac{p + \epsilon}{2} + \alpha p + \left(\frac{p-1}{2}\right)(2\alpha + 1) p$ eigenvalues of $D(u)$. The other eigenvalues have to sum up to $0$, hence there are
  \begin{align*}
    &\ \frac{1}{p}\left(\frac{p^3+\epsilon}{2} - \left(\frac{p + \epsilon}{2} + \alpha p + \left(\frac{p-1}{2}\right)(2\alpha + 1) p \right) \right) \\ =&\ \frac{1}{2p} \left(p^3 + \epsilon - p - \epsilon - 2\alpha p - (p - 1) ( 2\alpha + 1 )p  \right) \\ =&\ \frac{p^2 - p}{2} - \alpha p
  \end{align*}
  blocks having eigenvalues $1, \zeta, \zeta^2, ..., \zeta^{p-1}$. Thus in the case of $\varepsilon_g(u) > 0$ we have
  \[  D(u) \sim \left(\eigbox{\left(\frac{p + \epsilon}{2}  + \alpha p\right)}{1}, \eigbox{(2\alpha + 1)p}{\zeta^{q_1}, ..., \zeta^{q_{\frac{p-1}{2}}}}, \eigbox{\left(\frac{p^2 - p}{2} - \alpha p \right)}{1, \zeta, ..., \zeta^{p-1}} \right) \] where $Q = \left\{q_1, ...,  q_{\frac{p-1}{2}}\right\}$. In the other case, $\varepsilon_g(u) \leq 0$, we get  \[  D(u) \sim \left(\eigbox{\left(\frac{p + \epsilon}{2}  + \alpha p\right)}{1}, \eigbox{(2\alpha + 1)p}{\zeta^{n_1}, ..., \zeta^{n_{\frac{p-1}{2}}}}, \eigbox{\left(\frac{p^2 - p}{2} - \alpha p \right)}{1, \zeta, ..., \zeta^{p-1}} \right) \] with $N = \left\{ n_1, ..., n_{\frac{p-1}{2}} \right\}$. 
  
  The group $H$ is an extra-special $p$-group, its character theory is well-known, see e.g.\ \cite[Theorem 31.5]{Dornhoff}. $H$ has exactly $p-1$ non-linear irreducible characters which are all of degree $p$. They all vanish on the non-central elements of $H$ and take the values $p\zeta^j$ for $1 \leq j \leq p-1$ on the non-trivial central elements. Moreover $H$ posses $p^2$ linear characters corresponding to the quotient $H/\Z(H) = H/H' \simeq C_p \times C_p$. In particular, they have value $1$ on all central elements of $H$. See table \ref{CT_CpxCp:Cp}.
   \begin{table}[h!t]\centering
 \begin{tabular}{ccccccccc} \hline
 & $1$ & $b$ & $b^2$ & $...$ & $b^{p-1}c^{p-1}$ & $z$ & $...$ & $z^{p-1}$ \\ \hline \hline \\[-2.2ex]
\cline{2-6}
 $\chi_1$ & \multicolumn{5}{|c|}{\multirow{5}{*}{\vspace{-.2cm}{\Large CT($C_p \times C_p$)}}} & $1$ & $...$ & $1$ \\
 $\chi_2$ & \multicolumn{5}{|c|}{}  & $1$ & $...$ & $1$ \\
 $\vdots$ & \multicolumn{5}{|c|}{} & $\vdots$ & & $\vdots$ \\
  $\chi_{p^2 - 1}$ & \multicolumn{5}{|c|}{} & $1$ & $...$ & $1$ \\
 $\chi_{p^2}$ & \multicolumn{5}{|c|}{} & $1$ & $...$ & $1$ \\ \cline{2-6}
 $\psi_1$ &$p$ & $0$ & $0$ & $...$ & $0$ & $p\zeta$ & $...$ & $p\zeta^{p-1}$ \\
 $\vdots$ & $\vdots$ & & & & $\vdots$ & $\vdots$ & & $\vdots$ \\
 $\psi_{p-1}$ &$p$ & $0$ & $0$ & $...$ & $0$ & $p\zeta^{p-1}$ & $...$ & $p\zeta$ \\ \hline
 \end{tabular} \\[.2cm]
  \caption{Character table of $H \simeq (C_p \times C_p) \rtimes C_p$, the box indicates the character table of the quotient group $H/\Z(H) \simeq C_p \times C_p$.}\label{CT_CpxCp:Cp}
 \end{table}
 Now we decompose $\eta = \eta|_H$ into the irreducible characters of $H$.
 
 Let $v \in \Z(H)\setminus \{1 \}$. To obtain the multiplicity of the eigenvalue $1$ of $D(v)$ as calculated above, we must sum up exactly \[ \frac{p + \epsilon}{2} + \alpha p + \frac{p^2 - p}{2} - \alpha p =  \frac{p^2 + \epsilon}{2} \] linear characters of $H$ and hence \[ \frac{1}{p} \left( \frac{p^3 + \epsilon}{2} - \frac{p^2 + \epsilon}{2} \right) = \frac{p^2 - p}{2} \] irreducible non-linear character of $H$.
 
 Now let $w \in H$ be a non-central element. Since every irreducible non-linear character of $H$ vanishes on $w$, the character value $\eta(w)$ is the sum of exactly $\frac{p^2 + \epsilon}{2}$ roots of unity. Thus from \eqref{eq:chraval1} and \eqref{eq:chraval2} we obtain that $(\varepsilon_g(w), \varepsilon_h(w)) = (1, 0)$ or $(\varepsilon_g(w), \varepsilon_h(w)) = (0, 1)$. Furthermore, from the eigenvalues of $D(u)$ calculated in these equations, we get \begin{align}(\varepsilon_g(w), \varepsilon_h(w)) = (1, 0) \qquad \Leftrightarrow \qquad  (\varepsilon_g(w^n), \varepsilon_h(w^n)) = (0, 1) \quad \forall n \in N. \label{eq:power_n}\end{align}
 
 Now we compute the inner product $\langle \eta , \chi \rangle_H$ of $\eta$ with a non-trivial linear character $\chi$ of $H$. We split up the computation of the contributions of different parts of $H$ and omit the global factor $1/|H| = 1/p^3$ until adding all contributions. The contribution of the identity element is \[ \eta(1)\chi(1) = \frac{p^3 + \epsilon}{2}. \] There are $p-1$ more elements in the center of $H$. On $\eta$ half of them takes the value $\frac{p + \epsilon}{2} + \alpha p + (2\alpha + 1) p \sum\limits_{q \in Q} \zeta^q$, the other half the value $\frac{p + \epsilon}{2} + \alpha p + (2\alpha + 1) p \sum\limits_{n \in N} \zeta^n$. Since they all lie in the kernel of $\chi$ the contribution to the inner product is \begin{align*} &\ \frac{p-1}{2} \left(\frac{p + \epsilon}{2} + \alpha p + (2\alpha + 1) p \sum\limits_{q \in Q} \zeta^q \right)  +  \frac{p-1}{2} \left(  \frac{p + \epsilon}{2} + \alpha p + (2\alpha + 1) p \sum\limits_{n \in N} \zeta^n \right) \\ =&\  \frac{p-1}{2} \Big( p + \epsilon + 2\alpha p - (2\alpha + 1) p \Big) \\  =& \    \epsilon\cdot \frac{p - 1}{2}.                                                                                                                                                                                                                                                                                                                                                                                                                                                                                                                                                                                                                                        \end{align*}
 The kernel of $\chi$ contains another conjugacy class of cyclic subgroups consisting of exactly $p$ subgroups. In every such cyclic subgroup $\frac{p-1}{2}$ of the elements take the value $\frac{p+\epsilon}{2} + p\sum\limits_{q \in Q}\zeta^q$ on $\eta$ and $\frac{p-1}{2}$ of the elements take the value $\frac{p+\epsilon}{2} + p\sum\limits_{n \in N}\zeta^n$ on $\eta$. The same computation as above gives the contribution \[p \epsilon\cdot \frac{p - 1}{2}.\]
 
 To calculate the contributions of the other elements we need the following formulas which are direct consequences of the Gaussian sums in \eqref{eq:gs}: \begin{align*} \left(\sum\limits_{q \in Q}\zeta^q\right)\left(\sum\limits_{i \in Q}\zeta^i\right) + \left(\sum\limits_{n \in N}\zeta^n\right)\left(\sum\limits_{j \in N}\zeta^j\right) & = \frac{\epsilon\cdot p + 1}{2} \\ \text{and} \qquad  \left(\sum\limits_{n \in N}\zeta^n\right)\left(\sum\limits_{i \in Q}\zeta^i\right) + \left(\sum\limits_{q \in Q}\zeta^q\right)\left(\sum\limits_{j \in N}\zeta^j\right) & = \frac{-\epsilon\cdot p + 1}{2}. \end{align*}
 
 Every conjugacy class of non-central cyclic subgroups contains exactly $p$ subgroups. Let $\left< d\right>$ be such a subgroup. By $\chi(d) \in Q$ we indicate that $\chi(d) = \zeta^q$ for some $q \in Q$. To compute the contribution of the remaining elements we distinguish two cases.
 
\emph{Case 1:} $(\varepsilon_g(d), \varepsilon_h(d)) = (1, 0)$, $\chi(d^{-1}) \in Q$ or $(\varepsilon_g(d), \varepsilon_h(d)) = (0, 1)$, $\chi(d^{-1}) \in N$.\\ Then the contribution of the conjugacy class of $\left<d \right>$ is \begin{align*}  p\sum_{h \in \langle d \rangle \setminus \{1\}} \eta(h)\chi(h^{-1})  =&\ p \left(\sum_{q \in Q}\left( \frac{p+\epsilon}{2} + p\sum_{i \in Q} \zeta^i \right) \zeta^q + \sum_{n \in N}\left( \frac{p+\epsilon}{2} + p\sum_{j \in N} \zeta^j \right) \zeta^n \right) \\ =&\ p\left( - \frac{p + \epsilon}{2} + p\left(\left( \sum_{q \in Q} \zeta^q \right) \left( \sum_{i \in Q} \zeta^i \right) + \left( \sum_{n \in N} \zeta^n \right) \left( \sum_{j \in N} \zeta^j \right) \right) \right) \\ =&\ p\left( -\frac{p+\epsilon}{2} + p\left( \frac{\epsilon \cdot p + 1}{2} \right) \right) \\ =&\ \epsilon \cdot \frac{p^3 - p}{2}  \end{align*}
 
\emph{Case 2:} $(\varepsilon_g(d), \varepsilon_h(d)) = (1, 0)$, $\chi(d^{-1}) \in N$ or $(\varepsilon_g(d), \varepsilon_h(d)) = (0, 1)$, $\chi(d^{-1}) \in Q$.\\ Then the contribution of the conjugacy class of $\left<d \right>$ is \begin{align*} &\ p \left(\sum_{n \in N}\left( \frac{p+\epsilon}{2} + p\sum_{i \in Q} \zeta^i \right) \zeta^n + \sum_{q \in Q}\left( \frac{p+\epsilon}{2} + p\sum_{j \in N} \zeta^j \right) \zeta^q \right) \\ =&\ p\left( - \frac{p + \epsilon}{2} + p\left(\left( \sum_{n \in N} \zeta^n \right) \left( \sum_{i \in Q} \zeta^i \right) + \left( \sum_{q \in Q} \zeta^q \right) \left( \sum_{j \in N} \zeta^j \right) \right) \right) \\ =&\ p\left( -\frac{p+\epsilon}{2} + p\left( \frac{-\epsilon \cdot p + 1}{2} \right) \right) \\ =&\ -\epsilon \cdot \frac{p^3 + p}{2} \end{align*}

From now on let $I = \{0, 1, ..., p-1 \}$. Let $\chi$ be a non-trivial linear character of $H$ and let $s \in \Ker (\chi) \setminus \Z(H)$. Moreover let $t \not\in \Ker(\chi)$ such that $\chi(t^{-1}) \in Q$. Then $\{(ts^i)^{-1} \mid i \in I \}$ is a set which contains exactly one element from every conjugacy class of cyclic subgroups not lying in $\Ker(\chi)$ and $\chi((ts^i)^{-1}) \in Q$ for every $i$. Set \[ \gamma = |\{\ i \mid (\varepsilon_g(ts^i), \varepsilon_h(ts^i)) = (1,0)\ \} | \] and \[ \delta = |\{\ i \mid (\varepsilon_g(ts^i), \varepsilon_h(ts^i)) = (0,1)\ \} |. \] Then $\gamma + \delta = p$ and, summing up all the contributions obtained above, we get \begin{align*} \langle \eta, \chi \rangle_H =&\ \frac{1}{p^3} \left( \frac{p^3 + \epsilon}{2} + \epsilon \cdot \frac{p-1}{2} + \epsilon p \cdot \frac{p-1}{2} + \epsilon \gamma \cdot \frac{p^3 - p}{2} - \epsilon\delta  \cdot \frac{p^3 + p}{2} \right) \\ =&\ \frac{1}{2p^3} \left( p^3 \left(1 + \epsilon \gamma - \epsilon \delta \right) + \epsilon p^2 - \epsilon(\gamma + \delta )p \right) \\ =&\  \frac{1 +\epsilon \gamma - \epsilon \delta}{2}.  \end{align*} For $n \in N$ the map $\chi^n \colon H \to \ZZ[\zeta] \colon x \mapsto \chi(x)^n$ is also a linear character of $H$ and an analogous computation gives \[ \langle \eta, \chi^n \rangle_H = \frac{1 -\epsilon \gamma + \epsilon \delta}{2}. \] Since both, $\langle \eta, \chi \rangle_H$ and $\langle \eta, \chi^n \rangle_H$, are non-negative, necessarily $|\gamma - \delta| \leq 1$. Thus $\gamma = \frac{p \pm 1}{2}$ and $\delta = \frac{p \mp 1}{2}$.

Recall that by \eqref{eq:defH}, $H$ is generated by a central element $z$ and two other elements $b$ and $c$. We may assume w.l.o.g.\ by \eqref{eq:power_n} that $(\varepsilon_g(c), \varepsilon_h(c)) = (1, 0)$. We have that $\{ \langle c \rangle, \langle bc^i \rangle \mid i \in I \}$ is a set of representatives of the $p + 1$ conjugacy classes of non-central cyclic subgroups of $H$. Up to the action of $\operatorname{Gal}(\QQ(\zeta)/\QQ)$, every linear character is determined by its kernel. Let $n \in N$. Then by \eqref{eq:power_n} there exist $a_0, a_1, ..., a_{p-1} \in \{1, n\}$ such that $c, b^{a_0}, b^{a_1}c^{a_1}, ..., b^{a_{p-1}}c^{(p-1)a_{p-1}}$ all have partial augmentation $1$ at $g$ and $0$ at $h$.

Let $\chi$ be a non-trivial linear character with $\langle c \rangle \subseteq \Ker(\chi)$ and $\chi(b^{-1})\in Q$. Then $S = \{bc^i \mid i \in I\}$ is a set which contains exactly one element from every conjugacy class of cyclic subgroups not lying in $\Ker(\chi)$. By the above, $\frac{p\pm1}{2}$ of the elements of $S$ have partial augmentation $1$ at $g$ and the other elements have augmentation $0$ at $g$. Since $\varepsilon_g(bc^i) = 1$ if and only if $a_i = 1$ we get \begin{align} |\{\ i \in I \mid a_i = 1\ \}| \in \left\{ \frac{p \pm 1}{2} \right\}. \label{eq:m} \end{align}

Now let $j \in I$ and $\chi$ be a non-trivial linear character of $H$ such that $\langle bc^j\rangle \subseteq \Ker(\chi)$ and $\chi(c^{-1}) \in Q$. For every $i \in I \setminus \{j\}$ we determine one element of the form $ b^{\ell a_i}c^{\ell i a_i}$, for some $\ell$, lying in the coset $\Ker(\chi)c = \left\{ z^r b^k c^{jk + 1} \mid 0 \leq k, r \leq p-1 \right\}$. To do so let $\ell, k$ be such that \[ b^k c^{jk + 1} =b^{\ell a_i} c^{\ell i a_i}. \] Thus $k \equiv \ell a_i \bmod p$ and $jk + 1 \equiv \ell i a_i \bmod p$. This gives $\ell \equiv (ia_i - ja_i)^{-1} \bmod p$. For the partial augmentations of $b^{\ell a_i} c^{\ell i a_i}$ it only matters, by \eqref{eq:power_n}, whether $\ell$ is a quadratic residue modulo $p$. Hence $b^{\ell a_i} c^{\ell i a_i}$ has the same partial augmentations as $c$ if and only if $\ell \in Q$, i.e.\ $ia_i - ja_i \in Q$. Hence \begin{align} 1+ |\{\ i \in I \setminus \{j\} \mid ia_i - ja_i \in Q\ \}| \in \left\{ \frac{p\pm 1}{2} \right\},\label{eq:mj} \end{align} where the $1$ represents the element $c$.

Denote by $(\ r \mid p\ )$ the Legendre symbol of $r$ modulo the prime $p$ and set $\beta_i = (\ a_i \mid p\ )$ for $i \in I$. By \eqref{eq:m} we have \begin{align} \sum_{i \in I} \beta_i =\sum_{i \in I} (\ a_i \mid p\ ) \in \left\{\pm 1 \right\}.\label{eq:msum} \end{align} From \eqref{eq:mj} we get for every $j \in I$: \[1 + \sum_{i \in I} (\ i-j \mid p\ )\beta_i = 1 + \sum_{i \in I} (\ (i-j)a_i \mid p\ ) \in \left\{\pm 1 \right\}. \] Set $s_i = (\ i \mid p\ )$ for $i \in \{1, ..., p-1\}$. Then the above equations read as follows: there exist $m_0, ..., m_{p-1} \in \{\pm 1\}$ such that \[ \left( \begin{array}{ccccc} \beta_0 & s_1 & s_2 & \dots & s_{p-1} \\ s_{p-1} & \beta_1 & s_1 & \dots & s_{p-2} \\ \vdots & \vdots & \ddots & \vdots & \vdots \\ s_2 & s_3 & \dots & \beta_{p-2} & s_1 \\ s_1 & s_2 & \dots & s_{p-1} & \beta_{p-1} \end{array} \right) \left( \begin{array}{c} \beta_0 \\ \beta_1  \\ \vdots \\  \beta_{p-2} \\ \beta_{p-1} \end{array} \right) = \left( \begin{array}{c} m_0 \\ m_1 \\ \vdots \\ m_{p-2} \\ m_{p-1} \end{array} \right). \] Summing up all equations and sorting with regard to the $\beta_i$'s we get \[ \beta_0(s_1 + ... + s_{p-1}) + ... + \beta_{p-1}(s_1 + ... + s_{p-1}) + p = m_0 + ... + m_{p-1}.\] Since $s_1 + ... + s_{p-1} = 0$ all the $m_i$'s must be equal to $1$. Thus we have to solve the homogeneous system of linear equations given by \[ \left( \begin{array}{ccccc} 0 & s_1 & s_2 & \dots & s_{p-1} \\ s_{p-1} & 0 & s_1 & \dots & s_{p-2} \\ \vdots & \vdots & \ddots & \vdots & \vdots \\ s_2 & s_3 & \dots & 0 & s_1 \\ s_1 & s_2 & \dots & s_{p-1} & 0 \end{array} \right) \left( \begin{array}{c} \beta_0 \\ \beta_1  \\ \vdots \\  \beta_{p-2} \\ \beta_{p-1} \end{array} \right) = \left( \begin{array}{c} 0 \\0 \\ \vdots \\0 \\ 0 \end{array} \right). \] The null space of the system has (at most) dimension $1$ by the following \hyperlink{lemma}{lemma} and contains the vector $(\beta_0, \beta_1, ..., \beta_{p-1})^t = (1, 1, ..., 1)^t$. This is however out of the question by \eqref{eq:msum}. \hfill $\Box$ 

\begin{lemma} \hypertarget{lemma}{} Let $m$ be odd and $A \in \FF_2^{m \times m}$ where \[ A =  \left( \begin{array}{ccccc} 0 & 1 & 1 & \dots & 1 \\1 & 0 & 1 & \dots & 1 \\ \vdots & \vdots & \ddots & \vdots & \vdots \\ 1 & 1 & \dots & 0 & 1 \\ 1 & 1 & \dots & 1 & 0 \end{array} \right).\] Then $\operatorname{rank} A = m-1$. \end{lemma}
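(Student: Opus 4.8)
The plan is to identify $A$ with a very simple matrix and then read off its kernel directly, rather than attempting any row reduction. Over $\FF_2$ we have $-1 = 1$, so the matrix with zero diagonal and ones off the diagonal is precisely $A = J + I$, where $J \in \FF_2^{m \times m}$ denotes the all-ones matrix and $I$ the identity. Since every row of $J$ is the all-ones row, $J$ has rank one; but instead of combining the ranks of $J$ and $I$ it is cleaner to compute $\operatorname{rank} A = m - \dim \Ker A$ by determining the null space of $A$ outright.

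The first step is to describe the action of $J$ on a vector $v = (v_0, \dots, v_{m-1})^t$: writing $\sigma = \sum_{i} v_i \in \FF_2$ for the sum of the entries of $v$, one has $Jv = (\sigma, \sigma, \dots, \sigma)^t$. Hence, using $I = -I$ over $\FF_2$, the condition $Av = 0$ is equivalent to $Jv = v$, that is, to $v_i = \sigma$ for every index $i$. In other words, every null vector must be constant.

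The second step uses the hypothesis that $m$ is odd to pin down these constant vectors. If $v = 0$ the equation holds trivially. If $v = \mathbf{1} := (1, \dots, 1)^t$, then $\sigma = \sum_i 1 = m \equiv 1 \bmod 2$ because $m$ is odd, so $v_i = 1 = \sigma$ holds for all $i$ and $\mathbf{1}$ indeed lies in the kernel. Since a constant vector over $\FF_2$ is either $0$ or $\mathbf{1}$, these exhaust all possibilities, so $\Ker A = \{0, \mathbf{1}\}$ has dimension exactly $1$ and therefore $\operatorname{rank} A = m - 1$.

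I do not expect a genuine obstacle here: the only place the parity of $m$ enters is the consistency check $\sigma = 1$ for the all-ones vector, which is exactly the condition that fails when $m$ is even (in which case $A$ turns out to be invertible). The single point to phrase carefully is that ``$v_i = \sigma$ for all $i$'' forces $v$ to be constant, after which the two-element enumeration of constant vectors over $\FF_2$ closes the argument.
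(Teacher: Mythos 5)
Your proof is correct, and it takes a genuinely different route from the paper's. The paper works directly with the rows: summing all $m$ rows gives the zero vector (each column contains $m-1 \equiv 0 \bmod 2$ ones), so $\operatorname{rank} A \leq m-1$; then it shows the first $m-1$ rows are linearly independent by comparing coordinates --- if $\sum_{i=1}^{m-1} \alpha_i z_i = 0$, the $j$-th coordinate gives $\sum_{i \neq j} \alpha_i = 0$ while the last coordinate gives $\sum_i \alpha_i = 0$, whence $\alpha_j = 0$ for all $j$. You instead exploit the structural identity $A = J + I$ over $\FF_2$ and compute the kernel outright: $Av = 0$ is equivalent to $v_i = \sigma$ for all $i$, so the kernel consists exactly of the constant vectors, which for odd $m$ are $0$ and $\mathbf{1}$, and rank-nullity finishes. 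Both arguments are sound and equally short, but they buy slightly different things. Your version pins down $\Ker A$ explicitly as the span of the all-ones vector --- which dovetails nicely with how the lemma is used in the main text, where the paper must separately observe that $(1,1,\dots,1)^t$ lies in the null space after invoking the lemma only for the rank bound --- and it makes transparent the role of the parity hypothesis: for even $m$ the same computation shows $\Ker A = \{0\}$, i.e.\ $A$ is invertible. The paper's version avoids introducing the matrix $J$ and stays entirely at the level of elementary row manipulations, which is arguably more self-contained but yields only the rank, not the kernel.
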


\begin{proof} Summing up all rows of $A$ gives the zero vector, hence the rank of $A$ is at most $m - 1$. Let $z_1, ..., z_{m-1}$ be the first $m-1$ rows of $A$ and $\alpha_1, ..., \alpha_{m-1} \in \FF_2$ such that $\sum_{i = 1}^{m-1} \alpha_iz_i = 0$. Then we get $\sum_{i = 1, i\not= j}^{m-1} \alpha_i = 0$ for every $j \in \{1, ..., m-1\}$ and $\sum_{i = 1}^{m-1} \alpha_i = 0$. Hence $\alpha_j = 0$ for all $j \in  \{1, ..., m-1\}$. \end{proof}

\bibliographystyle{amsalpha}
\bibliography{non_abelian.bib}
 
\end{document}